\documentclass[12pt]{article}
\usepackage[a4paper, total={6in, 8in}, margin={1in}]{geometry}
\usepackage[utf8]{inputenc}

\usepackage{amsmath}
\usepackage{amsthm}
\usepackage{amssymb}
\usepackage{amsfonts}
\usepackage[hidelinks]{hyperref}
\usepackage{scrextend}
\usepackage{epsfig}
\usepackage{forest}

\theoremstyle{plain}
\newtheorem{theorem}{Theorem}

\newtheorem{lemma}[theorem]{Lemma}

\theoremstyle{definition}

\theoremstyle{remark}
\newtheorem{remark}[theorem]{Remark}

\title{Binary Trees and Sum of Two Squares}
\author{Hongshen Chua}
\date{}

\begin{document}
	\maketitle

\section{Introduction}

When it comes to binary trees of rational numbers, there are two that stand out in particular, namely the Stern--Brocot tree (Figure \ref{Fg: Stern Brocot}) and the Calkin--Wilf tree (Figure \ref{Fg: Calkin Wilf}).

Both trees start with the number $ 1 $ at the $ 0 $-th level. The $ (i + 1) $-th level of the Stern-Brocot tree is generated from two adjacent terms, $ a_1/b_1 $ and $ a_2/b_2 $, on the $ i $-th level by computing their mediant, $ (a_1 + a_2)/(b_1 + b_2) $. In contrast, to construct the Calkin--Wilf tree, we take a vertex $ a/b $ at the $ i $-th level, then set its left child as $ a/(a + b) $ and its right child as $ (a + b)/b $.
\begin{figure}[h]
\centering
\scalebox{1}{
\begin{forest}
	[$ \frac{1}{1} $
		[$ \frac{1}{2} $
			[$ \frac{1}{3} $
				[$ \frac{1}{4} $]
				[$ \frac{2}{5} $]
			]
			[$ \frac{2}{3} $
				[$ \frac{3}{5} $]
				[$ \frac{3}{4} $]
			]
		]
		[$ \frac{2}{1} $
			[$ \frac{3}{2} $
				[$ \frac{4}{3} $]
				[$ \frac{5}{3} $]
			]
			[$ \frac{3}{1} $
				[$ \frac{5}{2} $]
				[$ \frac{4}{1} $]
			]
		]
	]
\end{forest}
}
\caption{Stern--Brocot tree} \label{Fg: Stern Brocot}
\end{figure}

\begin{figure}[h]
\centering
\scalebox{1}{
\begin{forest}
	[$ \frac{1}{1} $
		[$ \frac{1}{2} $
			[$ \frac{1}{3} $
				[$ \frac{1}{4} $]
				[$ \frac{4}{3} $]
			]
			[$ \frac{3}{2} $
				[$ \frac{3}{5} $]
				[$ \frac{5}{2} $]
			]
		]
		[$ \frac{2}{1} $
			[$ \frac{2}{3} $
				[$ \frac{2}{5} $]
				[$ \frac{5}{3} $]
		]
			[$ \frac{3}{1} $
				[$ \frac{3}{4} $]
				[$ \frac{4}{1} $]
			]
		]
	]
\end{forest}
}
\caption{Calkin--Wilf tree} \label{Fg: Calkin Wilf}
\end{figure}

Both trees appear quite similar, so it is perhaps unsurprising that they are closely related. In fact, Backhouse and Ferreira \cite{Backhouse_Ferreira} described a remarkable matrix-based binary tree that naturally generates both the Stern--Brocot and Calkin--Wilf trees. We will explore this construction in more detail in Section \ref{Ch: Binary Tree}. 

The matrix binary tree also integrates seamlessly with the theory of continued fractions. The convergents of a continued fraction can be derived from a matrix similar to the one we use to generate our tree. Consequently, these convergents can be represented as paths through the matrix binary tree. This will be the focus of Section \ref{Ch: Continued Fraction}. 

Finally, Brillhart wrote a brief note \cite{Brillhart} presenting a proof of Fermat's famous theorem on the sum of two squares. The main idea of his method relies on continued fractions. Our path-based approach offers a new interpretation of his proof, which we will present in Section \ref{Ch: Sum of Two Squares}. 

\section{Binary Trees} \label{Ch: Binary Tree}

First, we construct a matrix binary tree that underlies both the Stern--Brocot and Calkin--Wilf trees. Starting with the $ 2 \times 2 $ identity matrix, a right ($ R $) and a left ($ L $) move correspond to post-multiplication by the matrices $ \begin{pmatrix} 1 & 1 \\ 0 & 1 \end{pmatrix} $ and $ \begin{pmatrix} 1 & 0 \\ 1 & 1 \end{pmatrix} $, respectively. Alternatively, an $ R $-move can be thought of as adding the left column to the \textit{right} column, while an $ L $-move involves adding the right column to the \textit{left} column. By repeatedly applying these moves, we can construct the matrix binary tree shown in Figure \ref{Fg: Matrix binary}.
\begin{figure}[h]
\centering
\scalebox{0.8}{
\begin{forest}
	[$ \begin{pmatrix} 1 & 0 \\ 0 & 1 \end{pmatrix} $
		[$ \begin{pmatrix} 1 & 0 \\ 1 & 1 \end{pmatrix} $
			[$ \begin{pmatrix} 1 & 0 \\ 2 & 1 \end{pmatrix} $
				[$ \begin{pmatrix} 1 & 0 \\ 3 & 1 \end{pmatrix} $]
				[$ \begin{pmatrix} 1 & 1 \\ 2 & 3 \end{pmatrix} $]
			]
			[$ \begin{pmatrix} 1 & 1 \\ 1 & 2 \end{pmatrix} $
				[$ \begin{pmatrix} 2 & 1 \\ 3 & 2 \end{pmatrix} $]
				[$ \begin{pmatrix} 1 & 2 \\ 1 & 3 \end{pmatrix} $]
			]
		]
		[$ \begin{pmatrix} 1 & 1 \\ 0 & 1 \end{pmatrix} $
			[$ \begin{pmatrix} 2 & 1 \\ 1 & 1 \end{pmatrix} $
				[$ \begin{pmatrix} 3 & 1 \\ 2 & 1 \end{pmatrix} $]
				[$ \begin{pmatrix} 2 & 3 \\ 1 & 2 \end{pmatrix} $]
			]
			[$ \begin{pmatrix} 1 & 2 \\ 0 & 1 \end{pmatrix} $
				[$ \begin{pmatrix} 3 & 2 \\ 1 & 1 \end{pmatrix} $]
				[$ \begin{pmatrix} 1 & 3 \\ 0 & 1 \end{pmatrix} $]
			]
		]
	]
\end{forest}
}
\caption{Matrix binary tree} \label{Fg: Matrix binary}
\end{figure}

From the matrix binary tree, we can easily recover the Stern--Brocot and Calkin--Wilf trees. By post-multiplying each matrix by $ \begin{pmatrix} 1 \\ 1 \end{pmatrix} $, which is equivalent to adding the columns together, we obtain the Stern--Brocot tree (Figure \ref{Fg: Stern Brocot}). On the other hand, if we pre-multiply each matrix by $ \begin{pmatrix} 1 & 1 \end{pmatrix} $, which corresponds to adding the rows together, we get the Calkin--Wilf tree (Figure \ref{Fg: Calkin Wilf}).

\section{Continued Fraction} \label{Ch: Continued Fraction}

Let $ [q_0; q_1, q_2, \ldots] $ represent the continued fraction
\begin{align*}
	q_0 + \cfrac{1}{q_1 + \cfrac{1}{q_2 + \ddots}}.
\end{align*}
The \textit{$ n $-th convergent} $ A_n/B_n $ of a continued fraction is the fraction obtained by truncating the continued fraction at $ q_n $, i.e., $ A_n/B_n = [q_0; q_1, q_2, \ldots, q_n] $. These convergents are related by the following linear recurrences:
\begin{align*}
	A_n = q_n A_{n - 1} + A_{n - 2} \quad \text{ and } \quad B_n = q_n B_{n - 1} + B_{n - 2},
\end{align*}
with the initial values $ A_{-1} = 1 $, $ A_0 = q_0 $, $ B_{-1} = 0 $, and $ B_0 = 1 $.

The matrix binary tree provides a powerful representation of the convergents. Given a continued fraction $ [q_0; q_1, q_2, \ldots] $, we define a path $ W $ through the tree by starting with $ q_0 $ $ R $-moves, followed by $ q_1 $ $ L $-moves, and so on, alternating the direction after each term. By an abuse of notation, we write $ R = \begin{pmatrix} 1 & 1 \\ 0 & 1 \end{pmatrix} $, $ L = \begin{pmatrix} 1 & 0 \\ 1 & 1 \end{pmatrix} $, and the matrix corresponding to the path $ W $ as $ R^{q_0} L^{q_1} \ldots $. This path produces the convergents, as demonstrated in the next theorem.
\begin{theorem} \label{Th: Convergent Matrix} 
Let $ A_n/B_n $ be the $ n $-th convergent of a continued fraction $ [q_0; q_1, q_2, \ldots] $. If a path $ W_n = R^{q_0} L^{q_1} \ldots $ ends with an $ R $-move, then 
\begin{align*}
	W_n = \begin{pmatrix} A_{n - 1} & A_n \\ B_{n - 1} & B_n \end{pmatrix}.
\end{align*}
Otherwise, if $ W_n $ ends with an $ L $-move, then 
\begin{align*}
	W_n = \begin{pmatrix} A_n & A_{n - 1} \\ B_n & B_{n - 1} \end{pmatrix}.
\end{align*}
\end{theorem}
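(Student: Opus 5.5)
Since $W_n = R^{q_0} L^{q_1} \cdots$ ends with an $R$-move exactly when the last block is $R^{q_n}$, i.e.\ when $n$ is even, the plan is induction on $n$. The invariant is: for even $n$, $W_n = \begin{pmatrix} A_{n-1} & A_n \\ B_{n-1} & B_n \end{pmatrix}$; for odd $n$, $W_n = \begin{pmatrix} A_n & A_{n-1} \\ B_n & B_{n-1} \end{pmatrix}$. The column conventions flip with the parity of $n$.

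The engine is the column-operation description of the moves from Section \ref{Ch: Binary Tree}. Post-multiplying by $R^{q} = \begin{pmatrix} 1 & q \\ 0 & 1 \end{pmatrix}$ adds $q$ times the left column to the right column. Post-multiplying by $L^{q} = \begin{pmatrix} 1 & 0 \\ q & 1 \end{pmatrix}$ adds $q$ times the right column to the left column. Both powers follow from a trivial induction on $q$.

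For the base case $n=0$, I compute $W_0 = R^{q_0} = \begin{pmatrix} 1 & q_0 \\ 0 & 1 \end{pmatrix} = \begin{pmatrix} A_{-1} & A_0 \\ B_{-1} & B_0 \end{pmatrix}$ using the stated initial values. This holds even if $q_0 = 0$, where $W_0 = I$.

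For the inductive step, suppose first that $n$ is even, so $W_n$ has $(A_{n-1}, B_{n-1})^T$ on the left and $(A_n, B_n)^T$ on the right. Then
\[
W_{n+1} = W_n L^{q_{n+1}}
\]
replaces the left column by $q_{n+1}(A_n,B_n)^T + (A_{n-1},B_{n-1})^T = (A_{n+1},B_{n+1})^T$ via the recurrences, and keeps $(A_n,B_n)^T$ on the right. This is exactly the $L$-form for $n+1$. If instead $n$ is odd, the newer convergent sits on the left. Post-multiplying by $R^{q_{n+1}}$ turns the right column $(A_{n-1},B_{n-1})^T$ into $(A_{n+1},B_{n+1})^T$ and keeps $(A_n,B_n)^T$ on the left, giving the $R$-form.

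The only delicate point is interpreting "ends with an $R$-move". I will assume $q_i \ge 1$ for $i \ge 1$, as is standard, so each block is nonempty and the last move is determined by the parity of $n$. The case $n=0$ with $q_0=0$ should be treated separately or excluded, since then no move has been made. Beyond this bookkeeping, the argument is routine; the one thing to keep straight is that the newest convergent always lands in the column just modified.
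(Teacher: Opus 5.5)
Your proof is correct and follows the same argument as the paper: induction on $n$, reading post-multiplication by $R^{q}$ and $L^{q}$ as column operations and applying the convergent recurrences, with the final move determined by the parity of $n$. The paper also checks $n=1$ as a separate base case, which your inductive step from $n=0$ already covers. Your remark that one needs $q_i \ge 1$ (and care when $q_0 = 0$) makes explicit an assumption the paper leaves implicit.
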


\begin{proof} A quick calculation shows that consecutive $ R $ or $ L $ moves have a simple matrix representation of
\begin{align*}
	R^q = \begin{pmatrix} 1 & q \\ 0 & 1 \end{pmatrix} \quad \text{ and } \quad L^q = \begin{pmatrix} 1 & 0 \\ q & 1 \end{pmatrix}.
\end{align*}

We proceed by induction. For the base case of $ n = 0 $, we have
\begin{align*}
	W_0 = R^{q_0} = \begin{pmatrix} 1 & q_0 \\ 0 & 1 \end{pmatrix} = \begin{pmatrix} A_{-1} & A_0 \\ B_{-1} & B_0 \end{pmatrix}.
\end{align*}
Next, it is simple to verify that
\begin{align*}
	A_1 = q_1 A_0 + A_{-1} = q_0 q_1 + 1 \quad \text{ and } \quad B_1 = q_1 B_0 + B_{-1} = 1.
\end{align*}
Hence, for the case $ n = 1 $, we compute
\begin{align*}
	R^{q_0} L^{q_1} = \begin{pmatrix} 1 & q_0 \\ 0 & 1 \end{pmatrix} \begin{pmatrix} 1 & 0 \\ q_1 & 1 \end{pmatrix} = \begin{pmatrix} q_0 q_1 + 1 & q_0 \\ q_1 & 1 \end{pmatrix} = \begin{pmatrix} A_1 & A_0 \\ B_1 & B_0 \end{pmatrix}.
\end{align*}
This completes the base cases for $ n = 0 $ and $ n = 1 $.

Now, suppose the result holds for all $ n \leq N $ for some $ N \geq 1 $. We prove it for $ n = N + 1 $. If $ W_{N + 1} $ ends with an $ R $-move, then $ W_N $ must have ended with an $ L $-move. By the inductive hypothesis, we get
\begin{align*}
	W_{N + 1} = \begin{pmatrix} A_N & A_{N - 1} \\ B_N & B_{N - 1} \end{pmatrix} \begin{pmatrix} 1 & q_{N + 1} \\ 0 & 1 \end{pmatrix} = \begin{pmatrix} A_N & q_{N + 1} A_N + A_{N - 1} \\ B_N & q_{N + 1} B_N + B_{N - 1} \end{pmatrix}.
\end{align*}
Using the recurrence relation for convergents, the last matrix simplifies to
\begin{align*}
	W_{N + 1} &= \begin{pmatrix} A_N & A_{N + 1} \\ B_N & B_{N + 1} \end{pmatrix}.
\end{align*}
On the other hand, if $ W_{N + 1} $ ends with an $ L $-move, then $ W_N $ must have ended with an $ R $-move, so 
\begin{align*}
	W_{N + 1} = \begin{pmatrix} A_{N - 1} & A_N \\ B_{N - 1} & B_N \end{pmatrix} \begin{pmatrix} 1 & 0 \\ q_{N + 1} & 1 \end{pmatrix} = \begin{pmatrix} q_{N + 1} A_N + A_{N - 1} & A_N \\ q_{N + 1} B_N + B_{N - 1} & B_N \end{pmatrix}.
\end{align*}
Again, the recurrence relation for convergents yields
\begin{align*}
	W_{N + 1} = \begin{pmatrix} A_{N + 1} & A_N \\ B_{N + 1} & B_N \end{pmatrix}.
\end{align*}
This completes the induction step, and hence the proof. 
\end{proof}

\section{Sum of Two Squares} \label{Ch: Sum of Two Squares}

In a letter to Mersenne, Fermat stated that any prime $ p $ of the form $ 4k + 1 $ can be written as a sum of two squares \cite{Dickson}. However, like many of his conjectures, Fermat did not provide a proof for this result. The first complete proof was given by Euler, using the method of infinite descent, a technique, rather fittingly, invented by Fermat. 

Our approach is inspired by Brillhart's excellent note  \cite{Brillhart} on this subject. First, we find a solution $ x_0 $ to the congruence $ x^2 \equiv -1 \pmod{p} $, where $ 0 < x_0 < p/2 $. Such a solution always exists as $ -1 $ is a quadratic residue modulo $ p $. Specifically, $ (-1 \mid p) = (-1)^{(p - 1)/2} = 1 $, where $ (\cdot \mid p) $ denotes the Legendre symbol modulo $ p $.

Once we have chosen our $ x_0 $, a result from Perron states that the continued fraction of $ p/x_0 $ has a particularly neat structure. 
\begin{lemma}[\cite{Perron}] \label{Th: Palindromic continued fraction}
The continued fraction of $ p/x_0 $ has an even number of terms and is palindromic, i.e., 
\begin{align*}
	p/x_0 = [q_0, q_1, \ldots, q_n, q_n, \ldots, q_1, q_0].
\end{align*}
\end{lemma}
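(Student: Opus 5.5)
I would prove the lemma with the matrix description of convergents from Theorem \ref{Th: Convergent Matrix}, using the transpose symmetry $R^T = L$. Write the continued fraction of $p/x_0$ as $[a_0; a_1, \ldots, a_m]$. Since $x_0 \ge 2$ for $p>2$, we have $p/x_0$ non-integer, so $m \ge 1$. We may choose the last term so that $m$ is odd (i.e.\ the number of terms $m+1$ is even), using the freedom $[\ldots, a_m] = [\ldots, a_m - 1, 1]$. With $m$ odd, the path $R^{a_0} L^{a_1} \cdots L^{a_m}$ ends with an $L$-move. By Theorem \ref{Th: Convergent Matrix} it equals
\begin{align*}
	M = \begin{pmatrix} A_m & A_{m-1} \\ B_m & B_{m-1} \end{pmatrix} = \begin{pmatrix} p & A_{m-1} \\ x_0 & B_{m-1} \end{pmatrix},
\end{align*}
since $\gcd(p,x_0)=1$ and convergents are in lowest terms. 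Also $\det M = 1$, because $\det R = \det L = 1$, so $pB_{m-1} - x_0 A_{m-1} = 1$.

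Next I would pin down the entries. The determinant relation gives $x_0 A_{m-1} \equiv -1 \pmod p$. Multiplying by $x_0$ and using $x_0^2 \equiv -1$ yields $A_{m-1} \equiv x_0 \pmod p$. Since $0 < A_{m-1} \le A_m = p$, and in fact $A_{m-1} < p$ when $m \ge 1$, we get $A_{m-1} = x_0$. Then $pB_{m-1} = 1 + x_0^2$, so $M$ is symmetric: $M = M^T$.

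Now transpose. Since $R^T = L$ and $L^T = R$,
\begin{align*}
	M^T = L^{a_m} R^{a_{m-1}} \cdots R^{a_1} L^{a_0}.
\end{align*}
Here I would invoke uniqueness. Every matrix in $SL_2(\mathbb{Z})$ with nonnegative entries has a unique expression as a word in $R$ and $L$, since the matrix binary tree is a tree: at each step the larger column determines which move was last. Writing $M$ as the word $R^{a_0} L^{a_1} \cdots L^{a_m}$, the two expressions must coincide as words.

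This is the step needing care, because of the boundary exponents. If $a_0 \ge 1$, the word for $M$ starts with $R$, while $M^T$ starts with $L^{a_m}$, $a_m \ge 1$, which is a contradiction. So I must use $0 < x_0 < p/2$ to control the start. Indeed $p/x_0 > 2$ gives $a_0 \ge 2$. The fix is to prepend a dummy $L^0$ and compare $L^{0}R^{a_0}\cdots L^{a_m}$ against the reverse; the mismatch suggests instead considering $N = M R^{0}$, or better, comparing $M$ with $J M^T J$ where $J$ swaps coordinates, since $J R J = L$. Concretely, $JM^TJ$ corresponds to the reversed word with $R$ and $L$ swapped back, i.e.\ $R^{a_m} L^{a_{m-1}} \cdots L^{a_0}$. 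It equals $\begin{pmatrix} B_{m-1} & x_0 \\ A_{m-1}' & p\end{pmatrix}$-type rearrangements.

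I expect the main obstacle to be getting this bookkeeping exactly right: which symmetry, $M = M^T$ or a $J$-conjugate, translates into reversal of the exponent sequence, and how the parity choice of $m$ and the endpoint adjustment of $a_m$ interact. Once the right symmetry is fixed, uniqueness of the $R$/$L$ word in the tree gives $a_i = a_{m-i}$ directly, with $m+1$ even. That is the palindrome $[q_0,\ldots,q_n,q_n,\ldots,q_0]$.
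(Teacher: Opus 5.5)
Your set-up is correct. With $m$ odd, Theorem~\ref{Th: Convergent Matrix} gives $M = R^{a_0}L^{a_1}\cdots R^{a_{m-1}}L^{a_m} = \begin{pmatrix} p & A_{m-1} \\ x_0 & B_{m-1}\end{pmatrix}$. Then $\det M = 1$, $x_0^2 \equiv -1 \pmod p$ and $0 < A_{m-1} < p$ force $A_{m-1} = x_0$, hence $M = M^T$.

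The gap is in the step that should finish the proof, and it comes from a slip in transposing. Transposition reverses the order of the factors \emph{and} swaps $R$ with $L$, so $(L^{a_m})^T = R^{a_m}$ and
\begin{align*}
	M^T = R^{a_m} L^{a_{m-1}} \cdots R^{a_1} L^{a_0},
\end{align*}
not $L^{a_m} R^{a_{m-1}} \cdots R^{a_1} L^{a_0}$ (a pattern that does not even alternate consistently). The ``contradiction'' you found is therefore an artifact. Both words start with $R$, end with $L$, and consist of $m+1$ blocks with positive exponents. So $M = M^T$ and uniqueness of $R$/$L$ words give $a_i = a_{m-i}$ for all $i$ at once, which is the palindrome with $n = (m-1)/2$.

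As written, however, you abandon $M = M^T$ and end with ``once the right symmetry is fixed''. The comparison with $JM^TJ$ you propose instead cannot work. In fact $JM^TJ = L^{a_m}R^{a_{m-1}}\cdots L^{a_1}R^{a_0}$, not the word you state. Also $JMJ = \begin{pmatrix} B_{m-1} & x_0 \\ x_0 & p \end{pmatrix} \neq M$, because $B_{m-1} = (1+x_0^2)/p < p$. So the proposal never actually establishes the palindrome.

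Relatedly, you invoke $0 < x_0 < p/2$ to fix a non-existent problem and miss where it is genuinely needed. You forced $m$ odd by possibly splitting the last quotient as $a_m - 1, 1$, so ``an even number of terms'' is so far a normalisation, not a conclusion. You must still show no splitting occurred, i.e.\ that the standard expansion (last quotient at least $2$) already has even length. The palindrome gives this: $a_m = a_0 = \lfloor p/x_0 \rfloor \ge 2$. Without the hypothesis it fails. For $p = 5$, $x_0 = 3$, the standard expansion $5/3 = [1;1,2]$ has odd length, and only the non-standard $[1;1,1,1]$ is palindromic.

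With these two repairs, your argument is a complete proof of a lemma the paper merely cites from Perron. It also inverts the paper's own computation. The paper uses the palindrome to see that the full-path matrix (its $W$, your $M$) is symmetric. You instead obtain that symmetry directly from $x_0^2 \equiv -1 \pmod p$ and deduce the palindrome from it.
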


So far, our proof follows Brillhart's approach, but this is the point where we diverge. Rather than working with continuants, consider the path
\begin{align*}
	W = R^{q_0} L^{q_1} \ldots S^{q_n} S^{q_n} \ldots R^{q_1} L^{q_0},
\end{align*} 
where $ S $ is either $ R $ or $ L $, depending on whether $ n $ is odd or even. Since the path $ W $ ends with an $ L $-move, Theorem \ref{Th: Convergent Matrix} tells us that 
\begin{align*}
	W = \begin{pmatrix} p & * \\ x_0 & * \end{pmatrix}.
\end{align*}

On the other hand, let $ M = R^{q_0 - 1} L^{q_1} \ldots S^{q_n} = \begin{pmatrix} a & b \\ c & d \end{pmatrix} $, where $ a $, $ b $, $ c $, and $ d $ are integers. Since $ R = L^T $, we have $ W = RMM^TL $. The middle term $ MM^T $ evaluates to 
\begin{align*}
	MM^T &= \begin{pmatrix} a & b \\ c & d \end{pmatrix} \begin{pmatrix} a & c \\ b & d \end{pmatrix} = \begin{pmatrix} a^2 + b^2 & ac + bd \\ ac + bd & c^2 + d^2 \end{pmatrix}.
\end{align*}
Therefore, we have
\begin{align*}
	RMM^TL = \begin{pmatrix} 1 & 1 \\ 0 & 1 \end{pmatrix} \begin{pmatrix} a^2 + b^2 & ac + bd \\ ac + bd & c^2 + d^2 \end{pmatrix} \begin{pmatrix} 1 & 0 \\ 1 & 1 \end{pmatrix},
\end{align*}
which simplifies to 
\begin{align*}
	W = \begin{pmatrix} (a + c)^2 + (b + d)^2 & (a + c)c + (b + d)d \\ (a + c)c + (b + d)d & c^2 + d^2 \end{pmatrix}.
\end{align*}
Comparing both expressions for $ W $ yields
\begin{align*}
	p = (a + c)^2 + (b + d)^2.
\end{align*}
Thus, we conclude the following theorem.
\begin{theorem}
	If a prime $ p $ has the form $ 4k + 1 $, then $ p $ is a sum of two squares. 
\end{theorem}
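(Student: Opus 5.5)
The plan is to assemble the argument already laid out before the statement into a complete proof, and to check the few points the narrative passes over quickly.

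\emph{Existence of $x_0$.} Since $p \equiv 1 \pmod 4$, Euler's criterion gives $(-1 \mid p) = (-1)^{(p-1)/2} = 1$. So some $x$ satisfies $x^2 \equiv -1 \pmod p$. Replacing $x$ by $p - x$ if necessary, we may take $0 < x_0 < p/2$; note $x_0 \neq p/2$ because $p$ is odd. Since $p > 2$ we have $x_0 \geq 2$, as $x_0 = 1$ would force $p \mid 2$. Moreover $\gcd(p, x_0) = 1$. Lemma \ref{Th: Palindromic continued fraction} then gives $p/x_0 = [q_0, q_1, \ldots, q_n, q_n, \ldots, q_1, q_0]$ with an even number of terms.

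\emph{Reading off $p$ and $x_0$ from $W$.} Let $W$ be the path defined above. The path alternates directions and has $2n+2$ blocks, an even number, so its final block $L^{q_0}$ is an $L$-block. Theorem \ref{Th: Convergent Matrix} then puts the last convergent $A/B$ of $p/x_0$ in the first column of $W$. To conclude that this column is exactly $(p, x_0)^T$, and not merely proportional to it, I use $\det W = 1$. This gives $A B' - A' B = \pm 1$, so $\gcd(A,B) = 1$. Since $\gcd(p, x_0) = 1$ as well, the reduced fractions $A/B$ and $p/x_0$ have equal numerators and denominators.

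\emph{The factorisation $W = R M M^T L$.} Because $q_0 \geq 1$, I can split off one $R$ at the front and one $L$ at the end. The condition $q_0 \geq 1$ holds because $p/x_0 > 2$. I then check that the remaining middle word is $M M^T$ with $M = R^{q_0 - 1} L^{q_1} \cdots S^{q_n}$. This uses $(XY)^T = Y^T X^T$ together with $R^T = L$ and $L^T = R$. Transposing $M$ reverses the order of the blocks and swaps $R$ with $L$. The reversed word then reads $S'^{q_n} \cdots$ with $S' \neq S$, so the middle of $W$ is $S^{q_n} S'^{q_n}$. This matches the fact that consecutive blocks of $W$ alternate, so the directions are consistent. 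The $(1,1)$ entry of $R M M^T L$ is $(a+c)^2 + (b+d)^2$, which equals $p$ by the step above.

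The main obstacle is the direction bookkeeping in the palindrome: verifying that the two middle blocks really have opposite directions (``$S^{q_n}S^{q_n}$'' in the text is a slight abuse of notation) and that the last block is $L$. I would settle this by a parity count on the $2n+2$ blocks. The gcd point is minor but needed to get equality rather than proportionality. Everything else is supplied by the Perron lemma and Theorem \ref{Th: Convergent Matrix}.
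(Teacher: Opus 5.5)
Your proposal is correct and follows the paper's proof essentially step for step. You use Perron's palindrome, read $(p, x_0)^T$ off the first column of $W$ via Theorem~\ref{Th: Convergent Matrix}, factor $W = RMM^TL$ using $R^T = L$, and compare $(1,1)$ entries. Your extra checks are points the paper passes over silently, and you handle them correctly: using $\det W = 1$ to get the first column equal to $(p, x_0)^T$ rather than merely proportional to it, checking $q_0 \ge 1$, and the parity count showing the two middle blocks $S^{q_n}$ have opposite directions.
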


\begin{remark}
	We also have the relation $ x_0 = (a + c)c + (b + d)d $.  
\end{remark}

An advantage of this method is that we can explicitly identify the two squares that make up the sum. The numbers $ (a + c) $ and $ (b + d) $ correspond to the path $ M^T $ applied to the matrix $ \begin{pmatrix} 1 \\ 1 \end{pmatrix} $. Hence, we can state the following theorem.
\begin{theorem}
	If the fraction $ x/y $ corresponds to the path $ M^T $ in the Stern--Brocot tree, then $ p = x^2 + y^2 $. 
\end{theorem}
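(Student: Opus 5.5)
The proof follows from the computation just before the statement; what remains is to make precise what ``the fraction corresponding to the path $M^T$'' means and to check it equals $(a+c)/(b+d)$. I will start from the matrix binary tree of Section~\ref{Ch: Binary Tree}, where the Stern--Brocot entry at the vertex reached by a path $P$ is obtained by post-multiplying the matrix of $P$ by $\begin{pmatrix} 1 \\ 1 \end{pmatrix}$. The numerator and denominator of that fraction are the two entries of the resulting column vector.

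The first step is to confirm that $M^T$ really is a path in the tree. Write $M = R^{q_0-1}L^{q_1}\cdots S^{q_n}$. Transposing reverses the order and swaps $R^T = L$ and $L^T = R$, so
\begin{align*}
M^T = (S^T)^{q_n}\cdots R^{q_1}L^{q_0-1},
\end{align*}
which is again a product of $R$'s and $L$'s, i.e.\ a path from the root. We need $q_0 \ge 1$, and this holds because $p/x_0 > 2$ forces $q_0 \ge 2$. So the exponent $q_0-1$ is nonnegative and the word makes sense.

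The second step is the computation. We have
\begin{align*}
M^T\begin{pmatrix}1\\1\end{pmatrix} = \begin{pmatrix} a & c \\ b & d\end{pmatrix}\begin{pmatrix}1\\1\end{pmatrix} = \begin{pmatrix} a+c \\ b+d\end{pmatrix},
\end{align*}
so the Stern--Brocot vertex at the end of the path $M^T$ is $x/y = (a+c)/(b+d)$.

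The remaining point is that the fraction is in lowest terms. Since $\det M^T = 1$, we get $\gcd(a+c,\,b+d) = 1$: any common divisor of $a+c$ and $b+d$ divides $\det\begin{pmatrix} a+c & c\\ b+d & d\end{pmatrix} = \det M^T = 1$. Hence the reduced fraction $x/y$ at that vertex has exactly $x = a+c$ and $y = b+d$, not some multiple of them. Substituting into the identity $p = (a+c)^2 + (b+d)^2$ from the preceding computation gives $p = x^2+y^2$.

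The main obstacle, mild as it is, is this lowest-terms issue: one has to read the tree's fraction as the column vector itself. The determinant argument closes that gap. The transpose-reversal step is routine, since it only uses $R^T = L$, which the paper already invokes.
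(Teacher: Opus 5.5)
Your proof is correct and follows the paper's argument. The paper simply observes that $M^T\begin{pmatrix}1\\1\end{pmatrix}=\begin{pmatrix}a+c\\ b+d\end{pmatrix}$ and combines this with $p=(a+c)^2+(b+d)^2$. Your extra checks are valid details the paper leaves implicit: that $M^T$ is a genuine word in $R$ and $L$ (since $q_0\ge 2$), and that $\gcd(a+c,b+d)=1$ (since $\det M^T=1$).
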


\begin{remark}
	The same holds for the Calkin--Wilf tree, but with the path $ M $ instead.
\end{remark}

\vfill

\end{document}